\renewcommand{\Re}{\operatorname{Re}}
\newcommand{\abs}[1]{\lvert#1\rvert}
\newcommand{\Abs}[1]{\left\lvert#1\right\rvert}
\newcommand{\norm}[1]{\lVert#1\rVert}
\newcommand{\bbR}{{\mathbb R}}
\newcommand{\bbN}{{\mathbb N}}
\newcommand{\bbZ}{{\mathbb Z}}
\newcommand{\bbD}{{\mathbb D}}
\numberwithin{equation}{section}
\theoremstyle{plain}
\newtheorem{theorem}{\bf Theorem}[section]
\newtheorem*{theorem*}{Theorem 1.1$'$}
\newtheorem{proposition}[theorem]{\bf Proposition}
\theoremstyle{definition}
\theoremstyle{remark}
\newtheorem*{remark*}{\bf Remark}
\begin{document}

\title[Three families of matrices]{Three families of matrices}

\author{Alexander Pushnitski}
\address{Department of Mathematics, King's College London, Strand, London, WC2R~2LS, U.K.}
\email{alexander.pushnitski@kcl.ac.uk}

\keywords{arithmetical matrix, $L$-matrix, structured matrix, Hardy spaces, Dirichlet series}

\subjclass[2020]{47B32, 11C20, 47-02}

\date{February 2024}

\begin{abstract}
This paper has an expository nature. 
We compare the spectral properties (such as boundedness and compactness) of three families of semi-infinite matrices and point out similarities between them. The common feature of these families is that they can be understood as matrices of some linear operations on appropriate Hardy spaces. 
\end{abstract}

\maketitle

\section{Introduction}\label{sec.z}
\subsection{Overview}
We denote $n\vee m=\max\{n,m\}$ and let $[n,m]$ be the least common multiple (LCM) of $n$ and $m$. Let $\rho,\tau$ be real parameters and $0<q<1$. 
We consider three families of semi-infinite matrices: 
\begin{align}
A(\tau,\rho)&=
\left\{\frac{q^{\tau(n\vee m)}}{q^{\tau(n+m)/2}}\, q^{\rho(n+m)/2}\right\}_{n,m=0}^\infty,
\label{0a}
\\
B(\tau,\rho)&=\left\{\frac{(nm)^{\tau/2}}{(n\vee m)^\tau}\frac1{(nm)^{\rho/2}}\right\}_{n,m=1}^\infty,
\label{0b}
\\
C(\tau,\rho)&=\left\{\frac{(nm)^{\tau/2}}{[n,m]^\tau}\frac1{(nm)^{\rho/2}}\right\}_{n,m=1}^\infty.
\label{0c}
\end{align}
Our main hero is $C$, while $A$ and $B$ are supporting actors brought in to give some context. The matrix $A$ is the simplest of the three, and should be considered as a warm-up before coming to $B$ and $C$ (although $A$ will make a surprising comeback at the end of the paper). 
Due to the identity
\[
2(n\vee m)-(n+m)=\abs{n-m},
\]
the matrix $A$ can be rewritten in a more concise way as 
\begin{equation}
A(\tau,\rho)=\{q^{\tau\abs{n-m}/2}q^{\rho(n+m)/2}\}_{n,m=0}^\infty;
\label{z1}
\end{equation}
we have chosen to write it as a function of $n\vee m$ in \eqref{0a} in order to emphasise the analogy with $B$ and $C$. 

The matrices $A$, $B$, $C$ satisfy the following scaling relations for any $k\in\bbN$:
\begin{equation}
A_{n+k,m+k}=q^{-\rho k}A_{n,m}, 
\quad
B_{kn,km}=k^{-\rho}B_{n,m},
\quad
C_{kn,km}=k^{-\rho}C_{n,m}.
\label{00}
\end{equation}
Thus, $B$ and $C$ are \emph{homogeneous} in $n,m$ of degree $-\rho$. 
In \eqref{0b} and \eqref{0c}, the homogeneous term is factored out, and a similar factorisation is used in \eqref{0a}.

The matrix $A(\tau,\rho)$ is considered as a (potentially unbounded) linear operator on the Hilbert space $\ell^2(\bbN_0)$, $\bbN_0=\{0,1,2,\dots\}$, while $B(\tau,\rho)$ and $C(\tau,\rho)$ are in the same way considered as linear operators on $\ell^2(\bbN)$, $\bbN=\{1,2,3,\dots\}$. 

We identify the sets of parameters $\rho$ and $\tau$ for which these operators are bounded, compact, positive semi-definite, etc. It turns out that the answers are similar in the three cases. These answers are summarised in a diagram in Figure~1.

One common feature of these three families is that they can be understood as matrices of certain linear operators acting on suitable Hardy spaces of analytic functions. These operators involve shifts and multiplications by reproducing kernels. 

\begin{figure}
\begin{tikzpicture}[scale=0.8]
\draw[fill=gray!30,draw=none]    (0,0) -- (0,2.5) -- (2.5,2.5) -- (2.5,-2.5) -- (0,0);
\draw [thick, ->] (0,-2.5) -- (0,2.5);
\draw [thick, ->] (-2.5,0) -- (2.5,0);
\draw [thick] (0,0) -- (2.5,-2.5);
\node at (2.7,0) {$\rho$};
\node at (0.3,2.8) {$\tau$};
\node at (-1.5,1.5) {$A(\tau,\rho)$};
\end{tikzpicture}
\begin{tikzpicture}[scale=0.8]
\draw[fill=gray!30,draw=none]    (0.5,0) -- (0.5,2.5) -- (2.5,2.5) -- (2.5,-2) -- (0.5,0);
\draw [thick, ->] (0,-2.5) -- (0,2.5);
\draw [thick, ->] (-2.5,0) -- (2.5,0);
\draw [thick] (0.5,0) -- (0.5,2.5);
\draw [thick] (0.5,0) -- (2.5,-2);
\node at (0.5,-0.4) {$1$};
\node at (3,0) {$\rho$};
\node at (0.3,2.8) {$\tau$};
\node at (-1.5,1.5) {$B(\tau,\rho)$};
\end{tikzpicture}
\begin{tikzpicture}[scale=0.8]
\draw[fill=gray!30,draw=none]    (0,0.5) -- (0,2.5) -- (2.5,2.5) -- (2.5,-2) -- (0,0.5);
\draw [thick, ->] (0,-2.5) -- (0,2.5);
\draw [thick, ->] (-2.5,0) -- (2.5,0);
\draw [thick] (0,0.5) -- (2.5,-2);
\node at (0.5,-0.4) {$1$};
\node at (-0.3,0.5) {$1$};
\node at (3,0) {$\rho$};
\node at (0.3,2.8) {$\tau$};
\node at (-1.5,1.5) {$C(\tau,\rho)$};
\end{tikzpicture}
\caption{In the (open) shaded region the corresponding matrix is bounded and compact. 
In the (open) complementary region, the corresponding matrix is unbounded.}
\end{figure}
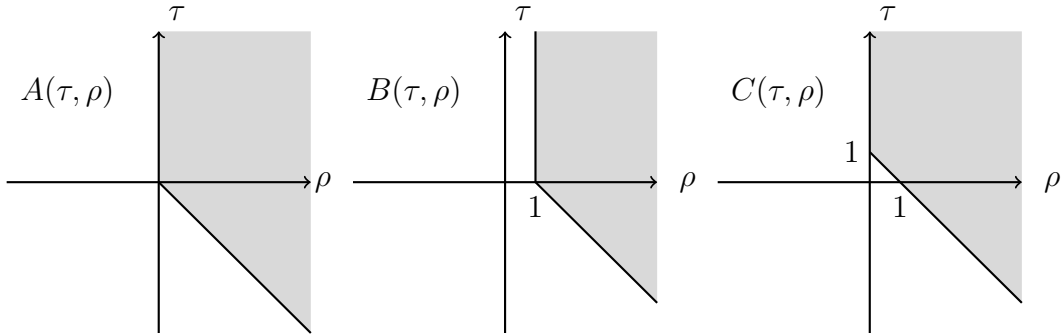

\subsection{The origins of $A$, $B$ and $C$}
A semi-infinite matrix $\mathcal L$ was termed \emph{$L$-matrix} in \cite{BM}, if its matrix elements are of the form
\[
{\mathcal L}_{n,m}=a_{m\vee n}, \quad m,n\in\bbN_0,
\]
where $a=\{a_n\}_{n=0}^\infty$ is a sequence of complex numbers. The term \emph{$L$-matrix} was chosen because $\mathcal L$ has constant elements on $L$-shaped ``corners'' $n\vee m=\text{const}$. 
One of the early appearances of these matrices in the literature can be dated back to  Choi's paper \cite{Choi}, where the $L$-matrix corresponding to $a_n=1/(n+1)$ was called \emph{the loyal companion} of Hilbert's matrix. 
Recently, $L$-matrices and their modifications started attracting some attention, beginning from the paper \cite{BM} by Bouthat and Mashreghi; see \cite{FS,FS2,Brevig,BPP}. One of the key reasons for this was identified by \v{S}tampach, who observed in \cite{FS} that any \emph{non-singular} (i.e. $a_{n+1}\not=a_n$ for all $n$) $\mathcal L$-matrix is the inverse of a Jacobi matrix of a special type. In this connection, one should also mention a closely related and very thorough study \cite{AJPR} of boundedness, compactness and Schatten class conditions of integral operators on the positive semi-axis with kernels $k(x,y)$ depending only on $x\vee y$.

Returning to our matrices $A$, $B$, $C$, we note that  $A(\tau,\rho)$ can be represented as 
\[
A(\tau,\rho)=\left\{q^{(\rho-\tau)n/2}{q^{\tau(n\vee m)}}q^{(\rho-\tau)m/2}\right\}_{n,m=0}^\infty,
\]
which is the product of an $L$-matrix and two diagonal matrices with $q^{(\rho-\tau)n/2}$ on the diagonal; in other words, $A$ is a \emph{weighted} $L$-matrix with an exponential weight. 
Particular cases of $A(\tau,\rho)$ have appeared in different connections in \cite{Sarason}, in \cite[Example 14]{FS} and in \cite{HP,HP2}.
Similarly, $B(\tau,\rho)$ is a weighted $L$-matrix with the power weight $n^{(\tau-\rho)/2}$, and this structure, together with the homogeneity \eqref{00}, makes it quite special. 
Particular cases of $B(\tau,\rho)$ have been studied in the already mentioned papers \cite{BM,FS,Brevig,BPP}. The homogeneity of $B$ tempts one to compare its properties with the properties of the integral operator with the kernel function defined by the the same formula, see the discussion in \cite{BPP,P,Brevig}. 

The origin of $C(\tau,\rho)$ is somewhat different. We first observe that the additive structure of integers plays a crucial role for $A$ and $B$ due to $n\vee m$, while in a similar way the multiplicative structure is essential for $C$ due to $[n,m]$. 
Let us denote by $(n,m)$ the greatest common divisor (GCD) of $n$ and $m$. In 1875, Smith \cite{Smith} computed the determinant of the $N\times N$ matrix 
\[
\{(n,m)\}_{n,m=1}^N
\]
(the answer is the product $\phi(1)\phi(2)\cdots\phi(N)$, where $\phi$ is Euler's totient function). Smith's beautiful paper created a small but distinct research area at the interface of algebra, number theory and spectral theory; see e.g. \cite{BL} and references therein. The matrix $\{(n,m)\}_{n,m=1}^N$ became known as the GCD matrix and $\{[n,m]\}_{n,m=1}^N$ as the LCM matrix, while $\{(n,m)^{-\tau}\}_{n,m=1}^N$ and $\{[n,m]^{-\tau}\}_{n,m=1}^N$ for $\tau\in\bbR$  became known as power GCD and power LCM matrices. Furthermore, one can also consider the weighted power GCD and power LCM versions
\[
\left\{\frac{n^\sigma m^\sigma}{[n,m]^\tau}\right\}_{n,m=1}^N
\quad\text{ and }\quad
\left\{\frac{n^\sigma m^\sigma}{(n,m)^\tau}\right\}_{n,m=1}^N
\]
for $\sigma\in\bbR$. 
Because of the relation
\[
(n,m)[n,m]=nm,
\]
the last two forms are equivalent, up to relabelling the parameters $\tau$ and $\sigma$:
\[
\frac{(nm)^\sigma}{[n,m]^\tau}
=
\frac{(nm)^{\sigma-\tau}}{(n,m)^{-\tau}}.
\]
Separating the homogenous part and relabelling $\rho=\tau-2\sigma$ in the last expression, we arrive at the entries \eqref{0c} of $C(\tau,\rho)$.
While early work on GCD/LCM and related matrices was mostly algebraic, in the last decades some interest arose in their properties as $N\to\infty$, see e.g. \cite{LS,HL,HEL}. 

Another motivation to study the semi-infinite version $C(\tau,\rho)$ arises from the special case $\rho=0$. In this case, $C(\tau,0)$ is related to multipliers acting on the Hardy space of Dirichlet series. This will be explained more fully in Section~\ref{sec.b}. This line of research originates from the paper by Hedenmalm, Lindqvist and Seip \cite{HLS} (although it is implicit in the work of Toeplitz \cite{Toeplitz}) and is partly motivated by connections with analytic number theory. More recently, the matrix $C(\tau,\rho)$ was studied in increasing generality in  \cite{H,H2,HP,HP2}.

\subsection{The structure of the paper}
We consider matrices $A$, $B$ and $C$ in Sections~\ref{sec.a}, \ref{sec.c} and \ref{sec.b} respectively. 
In each section, we start by placing each matrix in the context of the appropriate Hardy space, then we list briefly its spectral properties  depending on $\tau$ and $\rho$ and finally give some brief arguments to explain our claims or point to relevant literature for the proofs. 
Finally, in Section~\ref{sec.d4}, we will explain that $C$ can be considered as an \emph{infinite tensor product} of matrices $A$ over primes.

\section{The matrix $A(\tau,\rho)$}\label{sec.a}

\subsection{The quadratic form of $A(\tau,\rho)$}
Recall that the Hardy space $H^2$ on the unit circle can be identified with the set of functions representable by power series $f(z)=\sum_{n=0}^\infty f_n z^n$ with the finite norm that can be expressed in two alternative ways:
$$
\norm{f}_{H^2}^2=
\int_{-\pi}^\pi \abs{f(e^{i\theta})}^2\frac{d\theta}{2\pi}
=
\sum_{n=0}^\infty \abs{f_n}^2.
$$
The standard (orthonormal) basis in $H^2$ is $\{z^n\}_{n=0}^\infty$. 

\begin{proposition}
For any $\rho\in\bbR$ and $\tau>0$, and for any trigonometric polynomial $f(z)=\sum_{n=0}^\infty f_n z^n$, we have 
\begin{equation}
(1-q^{\tau})\int_{-\pi}^\pi \Abs{(1-q^{\tau/2} e^{i\theta})^{-1}f(q^{\rho/2} e^{i\theta})}^2\frac{d\theta}{2\pi}
=
\sum_{n,m=1}^\infty [A(\tau,\rho)]_{n,m}f_n\overline{f_m}.
\label{a1}
\end{equation}
\end{proposition}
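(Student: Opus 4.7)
The plan is to expand everything into Fourier series and apply Parseval. The left-hand side is an $L^2$-norm on the unit circle, so it should equal the sum of squared moduli of Fourier coefficients; those coefficients are convolutions of the coefficients of $f(q^{\rho/2}z)$ with the geometric-series coefficients of $(1-q^{\tau/2}z)^{-1}$.

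More concretely, first I would expand
\[
(1-q^{\tau/2} e^{i\theta})^{-1} = \sum_{k=0}^\infty q^{\tau k/2}e^{ik\theta},
\]
which converges absolutely since $0<q<1$ and $\tau>0$. The function $f(q^{\rho/2}e^{i\theta})$ is a trigonometric polynomial with coefficients $q^{\rho n/2}f_n$. Multiplying the two series and collecting terms of equal frequency gives the product as $\sum_{j\geq 0} g_j e^{ij\theta}$, where
\[
g_j = \sum_{n=0}^{j} q^{\tau(j-n)/2}\, q^{\rho n/2} f_n.
\]

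Next I apply Parseval to the integral on the left:
\[
\int_{-\pi}^\pi \Abs{(1-q^{\tau/2}e^{i\theta})^{-1}f(q^{\rho/2}e^{i\theta})}^2\frac{d\theta}{2\pi}
=\sum_{j=0}^\infty \abs{g_j}^2.
\]
Substituting the formula for $g_j$ and expanding the square yields a triple sum
\[
\sum_{j=0}^\infty\sum_{n,m=0}^{j} q^{\tau(2j-n-m)/2}\,q^{\rho(n+m)/2}\, f_n\overline{f_m}.
\]
The main (though still routine) step is then to interchange the order of summation. For fixed $n,m$, the inner sum over $j$ runs from $j=n\vee m$ to infinity, and equals
\[
q^{-\tau(n+m)/2}\sum_{j=n\vee m}^\infty q^{\tau j}
=\frac{q^{\tau(n\vee m)}}{(1-q^\tau)\, q^{\tau(n+m)/2}}.
\]
Recognising the right-hand side as $(1-q^\tau)^{-1}$ times $[A(\tau,\rho)]_{n,m}$ according to \eqref{0a}, and multiplying both sides by $(1-q^\tau)$, yields \eqref{a1}.

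The only delicate points are convergence matters: since $f$ is a trigonometric polynomial the sums over $n,m$ are finite, and the geometric series in $k$ (and equivalently in $j$) converges absolutely and uniformly in $\theta$, so Fubini and the interchange of summation are automatic. There is no genuine obstacle here; the proof is essentially a bookkeeping exercise built on the geometric-series expansion of the Cauchy kernel $(1-q^{\tau/2}z)^{-1}$ and the identity $\sum_{j\geq n\vee m}q^{\tau j}=q^{\tau(n\vee m)}/(1-q^\tau)$.
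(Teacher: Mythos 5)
Your proof is correct and is essentially the same computation as the paper's: both rest on the geometric expansion of $(1-q^{\tau/2}e^{i\theta})^{-1}$ and orthogonality of the exponentials, and your tail sum $\sum_{j\geq n\vee m}q^{\tau(2j-n-m)/2}$ is, after the substitution $j=k+n=\ell+m$, exactly the paper's sum $\sum_{\ell-k=n-m}q^{\tau(k+\ell)/2}$. The only cosmetic difference is that you apply Parseval to the product series and land directly on the $n\vee m$ form \eqref{0a}, whereas the paper computes the Fourier coefficients of the Poisson kernel and matches the $\abs{n-m}$ form \eqref{z1}.
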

\begin{proof}
Let us substitute the \emph{finite} sum ($f$ is a polynomial!) 
\[
f(q^{\rho/2}e^{i\theta})=\sum_{n=0}^\infty f_n q^{\rho n/2}e^{in\theta}
\]
into the integral in \eqref{a1} and expand. Then we see that it suffices to prove the identity
\[
(1-q^\tau)q^{\rho(n+m)/2}\int_{-\pi}^\pi \Abs{(1-q^{\tau/2} e^{i\theta})^{-1}}^2e^{i(n-m)\theta}\frac{d\theta}{2\pi}=[A(\tau,\rho)]_{n,m}
\]
for every $n$ and $m$. Recalling the expression \eqref{z1} for the matrix entries of $A(\tau,\rho)$, we see that this reduces to 
\begin{equation}
(1-q^\tau) \int_{-\pi}^\pi \Abs{(1-q^{\tau/2} e^{i\theta})^{-1}}^2e^{in\theta}\frac{d\theta}{2\pi}
=
q^{\tau\abs{n}/2}
\label{z4}
\end{equation}
for all $n\in\bbZ$. Now we observe that the expression 
\[
(1-q^\tau)  \Abs{(1-q^{\tau/2} e^{i\theta})^{-1}}^2
\]
is simply the Poisson kernel of the unit disk at the point $q^{\tau/2}$, and so \eqref{z4} represents the well-known expression for the $n$'th Fourier coefficient of the Poisson kernel. However, for expositional purposes we give the proof of \eqref{z4}, because our treatment of matrices $B$ and $C$ below will follow the same pattern.

Since by assumption $\tau>0$, we may expand $(1-q^{\tau/2} e^{i\theta})^{-1}$ in power series and interchange the summation and integration. Thus, for the left hand side of \eqref{z4} we obtain the expression
\[
(1-q^\tau) \sum_{k=0}^\infty \sum_{\ell=0}^\infty q^{\tau(k+\ell)/2} \int_{-\pi}^\pi  e^{i(k-\ell)\theta}e^{in\theta}\frac{d\theta}{2\pi}
=
(1-q^\tau) \sum_{\ell-k=n}q^{\tau(k+\ell)/2}, 
\]
where the sum is over all $k\geq0$ and $\ell\geq0$ with $\ell-k=n$. 
Considering separately the cases $n\geq0$ and $n\leq0$, we easily compute the last sum:
\[
(1-q^\tau) \sum_{\ell-k=n}q^{\tau(k+\ell)/2}
=
(1-q^\tau) \sum_{k=0}^\infty q^{\tau(2k+\abs{n})/2}
=
q^{\tau\abs{n}/2} (1-q^\tau)\sum_{k=0}^\infty q^{\tau k}
=q^{\tau\abs{n}/2},
\]
as required.
\end{proof}
This proposition shows that the matrix $A(\tau,\rho)$ can be identified with the matrix of the linear operator $(1-q^{\tau})T^*T$ in $H^2$ with respect to the standard basis, where $T:H^2\to H^2$ is a combination of a shift and a multiplication:
$$
T: f\mapsto (1-q^{\tau/2} z)^{-1}f(q^{\rho/2} z). 
$$
Observe that $(1-q^{\tau/2} z)^{-1}$ is the reproducing kernel at $H^2$ at the point $q^{\tau/2}$ of the unit disk. 

Of course, this identification makes sense only when $\tau>0$ and $\rho>0$. We shall nevertheless consider the matrix $A(\tau,\rho)$ for all real values of $\tau,\rho$ and ask for what values it corresponds to a bounded operator on $\ell^2(\bbN_0)$.

\subsection{The spectral properties of $A(\tau,\rho)$}\label{sec.b2}

Here we briefly list the spectral properties of $A(\tau,\rho)$; some explanation is given in the next subsection. 

\begin{enumerate}[(1)]
\item
Case $\tau>0$: $A(\tau,\rho)$ is positive semi-definite. 
\begin{enumerate}[(a)]
\item
$\rho>0$: $A(\tau,\rho)$ is bounded, compact and trace class. 

\item
$\rho=0$: $A(\tau,\rho)$ is bounded, but not compact. 

\item
$\rho<0$: $A(\tau,\rho)$ is unbounded. 
\end{enumerate}
\item
Case $\tau=0$: $A(\tau,\rho)$ is formally rank one. It is bounded if and only if $\rho>0$. 
\item
Case $\tau<0$: $A(\tau,\rho)$ is bounded if and only if $\rho+\tau>0$. In this case it is also compact and trace class. 
\end{enumerate}

\subsection{Justification and discussion}
\begin{enumerate}
\item
Let us check that $A(\tau,\rho)$ is positive semi-definite for $\tau>0$.
It suffices to check this for $\rho=0$, since the factor $q^{\rho(n+m)/2}$ does not affect positive semi-definiteness. For $\rho=0$,  this follows from the above proposition, as we have identified $A(\tau,0)$ with $(1-q^{\tau})T^*T$. To put it differently, $A(\tau,0)$ is positive semi-definite because the integral in the left-hand side of \eqref{a1} is non-negative. 
\item[(1a)]
Since $A(\tau,\rho)$  is positive semi-definite, it is trace class if and only if its diagonal entries are summable. It is easy to see that for $\rho>0$ and $\tau>0$ they are indeed summable. 

\noindent
In fact, the analysis of \cite{HP} shows that the eigenvalues of $A(\tau,\rho)$ go to zero exponentially fast: $\lambda_n(A(\tau,\rho))=\mathcal O(q^{\rho n})$ as $n\to\infty$. 

\item[(1b)]
In the case $\rho=0$ we see that $A(\tau,0)=\{q^{\tau\abs{n-m}/2}\}$ is the Toeplitz matrix with the symbol 
$$
\sum_{n=-\infty}^\infty q^{\tau\abs{n}/2}z^n, \quad \abs{z}=1
$$
(which is just the Poisson kernel for the disk at the point $q^{\tau/2}$),
and therefore it has a purely a.c. spectrum that coincides with the range of this symbol.
\item[(1c)]
Observe that the elements on the diagonal of $A(\tau,\rho)$ are $q^{\rho n}$. For $\rho<0$, this sequence goes to infinity as $n\to\infty$; it follows that $A(\tau,\rho)$ is unbounded. 
\item[(2)]
The fact that $A(0,\rho)$  is rank one should not be surprising as in this case it coincides with $C^*C$, where $C$ is the evaluation functional $f\mapsto f(q^{\rho/2})$. 
\item[(3)]
Key to understanding the case $\tau<0$ is the elementary formula
$$
q^{-\tau\abs{n-m}/2}+q^{\tau\abs{n-m}/2}=q^{-\tau(n-m)/2}+q^{\tau(n-m)/2}. 
$$
Multiplying by $q^{\rho(n+m)/2}$, we obtain  
\begin{equation}
[A(\tau,\rho)]_{n,m}=-[A(-\tau,\rho)]_{n,m}+\alpha_n\beta_m+\beta_n\alpha_m,
\label{a4}
\end{equation}
where $\alpha_n=q^{(\rho+\tau)n/2}$ and $\beta_n=q^{(\rho-\tau)n/2}$. 
The last two terms in the right hand side give a rank two operator. 

Now suppose $\rho+\tau>0$; then $\alpha,\beta\in\ell^2(\bbN_0)$ and so the rank two operator is bounded. Since by (1a) the operator $A(-\tau,\rho)$ is bounded, compact and trace class, we obtain that $A(\tau,\rho)$ is also bounded, compact and trace class. 

Conversely, suppose $\rho+\tau\leq0$; then it is easy to see directly that $A(\tau,\rho)$ is unbounded. Indeed, we have $[A(\tau,\rho)]_{n,0}=q^{(\tau+\rho)n/2}\notin\ell^2$. Thus, the image by $A(\tau,\rho)$ of the first vector of the standard basis is not in $\ell^2(\bbN_0)$, hence $A(\tau,\rho)$ is unbounded. 

The rank two reduction formula \eqref{a4} gives a little more. Recall that for $\tau>0$ the operator $A(\tau,\rho)$ is positive semi-definite, and so all of its eigenvalues are positive. Applying simple variational considerations to \eqref{a4}, one can show (see \cite[Section~3.2]{HP2}) that for $\tau<0$, $\rho+\tau>0$, at most one eigenvalue of $A(\tau,\rho)$ is positive, while there are infinitely many negative eigenvalues. 
\end{enumerate}

\section{The matrix $B(\tau,\rho)$}\label{sec.c}

\subsection{The quadratic form of $B(\tau,\rho)$}

\begin{proposition}
For any $\rho\in\bbR$ and $\tau>0$ and for any Dirichlet polynomial $f(s)=\sum_{n=1}^\infty f_n n^{-s}$, we have 
\begin{equation}
\frac{\tau}{2\pi}\int_{-\infty}^\infty\Abs{f(\tfrac{\rho}{2}+it)(it-\tau/2)^{-1}}^2dt
=
\sum_{n,m=1}^\infty f_n\overline{f_m}[B(\tau,\rho)]_{n,m}.
\label{b1}
\end{equation}
\end{proposition}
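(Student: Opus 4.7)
The plan is to imitate the proof for $A(\tau,\rho)$. I substitute the finite Dirichlet polynomial $f(\rho/2+it)=\sum_n f_n n^{-\rho/2}e^{-it\ln n}$ into the left-hand side of \eqref{b1}, expand the modulus squared, and interchange the (finite) sum with the integral in $t$. Pulling out the factor $(nm)^{-\rho/2}$, the identity reduces to proving, for every $u\in\bbR$,
\begin{equation}
\frac{\tau}{2\pi}\int_{-\infty}^\infty \frac{e^{-itu}}{t^2+\tau^2/4}\,dt=e^{-(\tau/2)\abs{u}},
\label{plan:fourier}
\end{equation}
evaluated at $u=\ln(n/m)$.

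To stay close in spirit to the $A$-proof (which expanded $(1-q^{\tau/2}e^{i\theta})^{-1}$ as a geometric series instead of invoking the Poisson kernel), I would derive \eqref{plan:fourier} from the Laplace representation, valid for $\tau>0$,
\[
(\tau/2-it)^{-1}=\int_0^\infty e^{-(\tau/2-it)s}\,ds.
\]
Writing $\abs{it-\tau/2}^{-2}$ as the corresponding double integral over $s,s'\ge 0$ and swapping the order of integration, the integral in $t$ collapses (via the Fourier-inversion identity $\tfrac{1}{2\pi}\int e^{it(s-s'-u)}dt=\delta(s-s'-u)$, or by mollification to avoid delta functions) to
\[
\tau\iint_{\{s,s'\ge 0,\ s-s'=u\}}e^{-\tau(s+s')/2}\,ds\,ds',
\]
which an elementary calculation (treating $u\ge 0$ and $u\le 0$ separately) gives as $e^{-\tau\abs{u}/2}$. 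A reader preferring a contour-integral derivation could equally well close the $t$-contour in the appropriate half-plane and pick up the residue at $t=\mp i\tau/2$.

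Finally, inserting $u=\ln(n/m)$ into \eqref{plan:fourier} gives
\[
\frac{\tau}{2\pi}\int_{-\infty}^\infty \frac{(n/m)^{-it}}{t^2+\tau^2/4}\,dt
=e^{-(\tau/2)\abs{\ln n-\ln m}}
=\left(\frac{\min(n,m)}{n\vee m}\right)^{\tau/2},
\]
and the identity $\min(n,m)\cdot(n\vee m)=nm$ turns this into $(nm)^{\tau/2}/(n\vee m)^\tau$. Multiplying by $(nm)^{-\rho/2}$ recovers $[B(\tau,\rho)]_{n,m}$, completing the reduction.

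The main technical step is the Fourier computation \eqref{plan:fourier}; everything else is algebra and the routine interchange of a finite sum with an integral. The only mild subtlety is justifying the term-by-term swap when writing the product $\abs{it-\tau/2}^{-2}$ as a double Laplace integral; this is immediate by Fubini once one inserts a Gaussian cutoff in $t$ and passes to the limit, or is circumvented entirely by the contour-integral route.
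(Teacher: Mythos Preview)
Your proposal is correct and follows essentially the same route as the paper: substitute the finite Dirichlet sum, reduce to the Fourier/Poisson identity $\frac{\tau}{2\pi}\int_{\bbR}\frac{e^{-itu}}{t^2+\tau^2/4}\,dt=e^{-\tau\abs{u}/2}$, and then simplify algebraically to $[B(\tau,\rho)]_{n,m}$. The only difference is cosmetic: the paper evaluates the key integral directly by the residue calculus, whereas you derive it via the Laplace representation of $(\tau/2-it)^{-1}$ to keep the parallel with the geometric-series computation in the $A$-proof (and you note the contour route yourself as an alternative).
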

\begin{proof}
Let us substitute the finite sum
\[
f(\tfrac{\rho}{2}+it)=\sum_{n=1}^\infty f_n n^{-\rho/2}n^{-it}
\]
into the integral in the left hand side of \eqref{b1} and expand. 
Then we see that it suffices to prove the identity
\begin{align*}
(nm)^{-\rho/2}\frac{\tau}{2\pi}\int_{-\infty}^\infty \frac{(n/m)^{-it}}{{\abs{t-i\tau/2}}^2}dt
=
[B(\tau,\rho)]_{n,m}
\end{align*}
for all $n$ and $m$. Observe that here
\[
\frac{\tau}{2\pi}\frac{1}{{\abs{t-i\tau/2}}^2}
\]
is the Poisson kernel of the upper half-plane at the point $i\tau/2$. 
Computing the integral  by the residue calculus yields
\begin{align*}
\frac{\tau}{2\pi}\int_{-\infty}^\infty \frac{(n/m)^{-it}}{(t-i\tau/2)(t+i\tau/2)}dt
&=
e^{-\abs{\log(n/m)}\tau/2}
\\
&=
\frac1{(n/m)^{\tau/2}\vee (m/n)^{\tau/2}}
=
\frac{(nm)^{\tau/2}}{(n\vee m)^\tau}.
\end{align*}
Putting this together, we obtain \eqref{b1}.
\end{proof}

We recall (see e.g. \cite{HLS}) that the Hardy space $\mathscr H^2$ is the Hilbert space of all Dirichlet series $\sum_{n=1}^\infty f_n n^{-s}$ with the finite norm $(\sum_{n=1}^\infty \abs{f_n}^2)^{1/2}$. For Dirichlet polynomials $f$, this norm can be alternatively expressed as
$$
\norm{f}_{\mathscr H^2}^2=\lim_{T\to\infty}\frac1{2T}\int_{-T}^T\abs{f(it)}^2dt.
$$
By Cauchy-Schwarz, all Dirichlet series in $\mathscr H^2$ are analytic in the half-plane $\Re s>1/2$. In fact, the functional of point evaluation at $s$ is bounded on $\mathscr H^2$ if and only if $\Re s>1/2$.

The above proposition tells us that we can identify $B(\tau,\rho)$ with the matrix of the linear operator $\tfrac{\tau}{2\pi}T^*T$ in the standard (orthonormal) basis $\{n^{-s}\}_{n=1}^\infty$
of $\mathscr H^2$, where $T:\mathscr H^2\to L^2(\bbR)$ is the map
$$
T:f\mapsto f(\tfrac{\rho}{2}+it)(it-\tau/2)^{-1}, \quad t\in\bbR.
$$
(Observe that $\tfrac1{2\pi}(it-\tau/2)^{-1}$ is the reproducing kernel for the Hardy class in the right half-plane, at the point $\tau/2$.)
This identification only makes sense for $\tau>0$ and $\rho\geq0$, but below we consider $B(\tau,\rho)$ for all real $\tau$ and $\rho$.

\subsection{The spectral properties of $B(\tau,\rho)$}\label{sec.c2}
\begin{enumerate}[(1)]
\item
Case $\tau>0$: $B(\tau,\rho)$ is positive semi-definite. 

\begin{enumerate}[(a)]
\item
$\rho>1$: $B(\tau,\rho)$ is bounded, compact and trace class. 
\item
$\rho=1$:  $B(\tau,\rho)$ is bounded, but non-compact.
\item
$\rho<1$: $B(\tau,\rho)$ is unbounded. 
\end{enumerate}

\item
Case $\tau=0$: $B(\tau,\rho)$ is formally rank one. It is bounded if and only if $\rho>1$.
\item
Case $\tau<0$: $B(\tau,\rho)$ is bounded if and only if $\rho+\tau>1$.
In this case it is also compact and trace class. 
\end{enumerate}

\subsection{Justification and discussion}
\begin{enumerate}
\item
Let us check that $B(\tau,\rho)$ is positive semi-definite for $\tau>0$.
It suffices to check this for $\rho=0$, since the factor $(nm)^{-\rho/2}$ does not affect positive semi-definiteness. For $\rho=0$,  the matrix $B(\tau,0)$ is positive semi-definite because the left-hand side of \eqref{b1} is non-negative. 
\item[(1a)]
Since $B(\tau,\rho)$ is positive semi-definite and the elements on the diagonal are summable for $\rho>1$, we find that in this case $B(\tau,\rho)$ is trace class. 

It is an interesting question to determine the asymptotics of the eigenvalues of $B(\tau,\rho)$. As far as the author is aware, this question is open.
\item[(1b)]
The spectrum of the operator $B(\tau,\rho)$ for $\rho=1$ was studied in \cite{BPP}.
In particular, it was determined that $B(\tau,\rho)$ has absolutely continuous spectrum $[0,4/\tau]$ with multiplicity one. For all sufficiently large $\tau$, it also has eigenvalues above the continuous spectrum. 
\item[(1c)]
For $\rho<0$ it is easy to see that $B(\tau,\rho)$ is unbounded. Indeed, the diagonal elements are $n^{-\rho}$, which tend to infinity as $n\to\infty$.

For $0\leq \rho<1$, in order to show that $B(\tau,\rho)$ is unbounded, we need to be a little more careful. Take $\sigma$ such that $\tfrac12<\sigma<1-\tfrac12\rho$, and consider the element $x=(x_1,x_2,\dots)\in\ell^2(\bbN)$ with $x_n=n^{-\sigma}$. 
Then 
\begin{align*}
\sum_{n,m=1}^\infty &[B(\tau,\rho)]_{n,m}x_nx_m
\geq
\sum_{n=1}^\infty \sum_{m=1}^{n-1}[B(\tau,\rho)]_{n,m}(nm)^{-\sigma}
\\
&=
\sum_{n=1}^\infty n^{-\frac12(\tau+\rho)-\sigma}\sum_{m=1}^{n-1}m^{-\frac12(\rho-\tau)-\sigma}.
\end{align*}
Since $\sigma<1-\frac12\rho<1-\frac12\rho+\frac12\tau$, for the sum over $m$ we have
\[
\sum_{m=1}^{n-1}m^{-\frac12(\rho-\tau)-\sigma}
\geq
C n^{1-\frac12(\rho-\tau)-\sigma}.
\]
Substituting this into the sum over $n$, we find 
\[
\sum_{n=1}^\infty n^{-\frac12(\tau+\rho)-\sigma}\sum_{m=1}^{n-1}m^{-\frac12(\rho-\tau)-\sigma}
\geq
C\sum_{n=1}^\infty  n^{-2\sigma-\rho+1}=\infty,
\]
and therefore $B(\tau,\rho)$ is unbounded.

\item[(2)]
In this case $B(\tau,\rho)$ corresponds to the point evaluation functional in $\mathscr H^2$ at the point $s=\rho/2$. This functional is bounded if and only if $\rho>1$. 

\item[(3)]
Key to this case is a rank two formula similar to \eqref{a4}. Denoting $n\wedge m=\min\{n,m\}$, we observe that 
$$
\frac1{(n\wedge m)^\tau}+\frac1{(n\vee m)^\tau}
=
\frac1{n^\tau}+\frac1{m^\tau}.
$$
Writing 
$$
\frac1{(n\wedge m)^\tau}=\frac{(n\vee m)^\tau}{(nm)^\tau}
$$
and multiplying by $(nm)^{(\tau-\rho)/2}$, we find
$$
B(\tau,\rho)=-B(-\tau,\rho)+\alpha_n\beta_m+\beta_n\alpha_m,
$$
where $\alpha_n=n^{-(\rho+\tau)/2}$ and $\beta_n=n^{-(\rho-\tau)/2}$. 
The last two terms in the right hand side give a rank two operator.

If $\rho+\tau>1$, this rank two operator is bounded. By (1a), the operator $B(-\tau,\rho)$ is bounded, compact and trace class, and so we find that $B(\tau,\rho)$ is also bounded, compact and trace class. It is also not difficult to see that it has one positive and infinitely many negative eigenvalues. 

Conversely, if $\rho+\tau\leq1$, then $[B(\tau,\rho)]_{n,1}=n^{-(\rho+\tau)/2}\notin\ell^2$, and therefore $B(\tau,\rho)$ is unbounded. 
\end{enumerate}

\section{The matrix $C(\tau,\rho)$}\label{sec.b}

\subsection{The quadratic form of $C(\tau,\rho)$}

Below $\zeta$ is the Riemann zeta-function. 

\begin{proposition}
For any $\rho\in\bbR$, $\tau>2$ and for any Dirichlet polynomial $f(s)=\sum_{n=1}^\infty f_n n^{-s}$, we have
\begin{equation}
\lim_{T\to\infty}\frac1{2T}\int_{-T}^T\Abs{\zeta(\tfrac{\tau}{2}+it)f(\tfrac{\rho}{2}+it)}^2dt
=
\zeta(\tau)\sum_{n,m=1}^\infty f_n\overline{f_m}[C(\tau,\rho)]_{n,m}.
\label{c1}
\end{equation}
\end{proposition}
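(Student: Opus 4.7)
The plan is to mimic the pattern used for $A$ and $B$: substitute the Dirichlet series of the relevant reproducing object (here $\zeta$), expand the square, and use the orthogonality
\begin{equation*}
\lim_{T\to\infty}\frac{1}{2T}\int_{-T}^T N^{-it}\,dt=\delta_{N,1}, \qquad N\in\bbN,
\end{equation*}
which is the defining feature of the $\mathscr H^2$-norm. The assumption $\tau>2$ guarantees that $\zeta(\tau/2+it)=\sum_{k=1}^\infty k^{-\tau/2}k^{-it}$ converges absolutely and uniformly in $t$, so that products and integrals may be interchanged without difficulty. Since $f$ is a Dirichlet \emph{polynomial}, everything remaining is a finite sum after this step.

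Concretely, after substituting the series for $\zeta$ and for $f$ I obtain
\begin{equation*}
\zeta(\tfrac{\tau}{2}+it)f(\tfrac{\rho}{2}+it)
=\sum_{k,n}f_n\, k^{-\tau/2}n^{-\rho/2}(kn)^{-it},
\end{equation*}
with absolute convergence uniformly in $t$. Expanding the modulus squared, applying the above orthogonality, and performing the $\lim_{T\to\infty}\frac{1}{2T}\int_{-T}^T$, the only surviving terms satisfy $k_1 n=k_2 m$, giving
\begin{equation*}
\lim_{T\to\infty}\frac1{2T}\int_{-T}^T\Abs{\zeta f}^2\,dt
=\sum_{n,m}f_n\overline{f_m}(nm)^{-\rho/2}\sum_{\substack{k_1,k_2\geq1\\ k_1 n=k_2 m}}(k_1 k_2)^{-\tau/2}.
\end{equation*}

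The one nonroutine step is the arithmetic evaluation of the inner sum. Writing $d=(n,m)$, $n=dn'$, $m=dm'$ with $(n',m')=1$, the condition $k_1n=k_2m$ forces $k_1=m'j$, $k_2=n'j$ for some $j\geq1$. Hence
\begin{equation*}
\sum_{k_1 n=k_2 m}(k_1k_2)^{-\tau/2}=(n'm')^{-\tau/2}\sum_{j=1}^\infty j^{-\tau}=\zeta(\tau)(n'm')^{-\tau/2}.
\end{equation*}
Using the identity $(n,m)[n,m]=nm$, one has $n'm'=nm/d^2=(nm)^2/[n,m]^2/nm$, giving $(n'm')^{-\tau/2}=(nm)^{\tau/2}/[n,m]^\tau$. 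Substituting this back recovers exactly the matrix entries $[C(\tau,\rho)]_{n,m}$ with the overall factor $\zeta(\tau)$, as claimed.

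The potential obstacle I expected, the justification of the interchange of limits, is harmless here thanks to $\tau>2$ and the fact that $f$ is a polynomial; the actual content of the identity is the number-theoretic bookkeeping of the last paragraph, which exhibits $[n,m]$ as the natural quantity arising from parametrising the Diophantine equation $k_1 n=k_2 m$.
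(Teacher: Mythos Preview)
Your argument is correct and follows the paper's proof essentially verbatim: expand the absolutely convergent Dirichlet series for $\zeta(\tau/2+it)$, use the orthogonality of the characters $n^{-it}$ under the Besicovitch mean, and parametrise the solutions of $k_1n=k_2m$ via the GCD to obtain the factor $\zeta(\tau)(nm)^{\tau/2}/[n,m]^\tau$. One cosmetic slip to fix: the intermediate expression ``$n'm'=(nm)^2/[n,m]^2/nm$'' is garbled (parsed left-to-right it gives $nm/[n,m]^2$, the reciprocal of what you want); the correct chain is $n'm'=nm/d^2=[n,m]^2/(nm)$, after which your stated conclusion $(n'm')^{-\tau/2}=(nm)^{\tau/2}/[n,m]^\tau$ is right.
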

\begin{proof}
Let us substitute the finite sum
\[
f(\tfrac{\rho}{2}+it)=\sum_{n=1}^\infty f_n n^{-\rho/2}n^{-it}
\]
into the integral in the left hand side of \eqref{c1} and expand. Then we see that it suffices to prove the identity 
\[
(nm)^{-\rho/2}\lim_{T\to\infty}\frac1{2T}\int_{-T}^T
\Abs{\zeta(\tfrac{\tau}{2}+it)}^2(n/m)^{-it}dt
=
\zeta(\tau)
[C(\tau,\rho)]_{n,m}
\]
for all $n$ and $m$. Recalling the definition \eqref{0c} of the matrix entries of $C(\tau,\rho)$, we see that this reduces to
\begin{equation}
\lim_{T\to\infty}\frac1{2T}\int_{-T}^T
\Abs{\zeta(\tfrac{\tau}{2}+it)}^2(n/m)^{-it}dt
=
\zeta(\tau)\frac{(nm)^{\tau/2}}{[n,m]^\tau}.
\label{z3}
\end{equation}
Let us check the last identity. 

Since by assumption $\tau>2$, the Dirichlet series expansion for $\zeta(\tfrac{\tau}{2}+it)$ converges absolutely, and therefore for the left hand side of \eqref{z3} we obtain the expression
\[
\sum_{k=1}^\infty \sum_{\ell=1}^\infty (k\ell)^{-\tau/2}
\lim_{T\to\infty}\frac1{2T}\int_{-T}^T (k/\ell)^{it} (n/m)^{-it}dt.
\]
Here the limit exists and equals one if $km=\ell n$ and zero otherwise. Thus, we find 
\[
\lim_{T\to\infty}\frac1{2T}\int_{-T}^T
\Abs{\zeta(\tfrac{\tau}{2}+it)}^2(n/m)^{-it}dt
=
\sum_{km=\ell n}(k\ell)^{-\tau/2},
\]
where the sum is over all pairs $k,\ell$ with $km=\ell n$. We need to compute the last sum. Denote $d=(n,m)$ (the GCD of $n$ and $m$) and let $n=ad$, $m=bd$. Then the pairs $k$, $\ell$ with $kb=\ell a$ can be listed as $k=ja$, $\ell=jb$, with $j\in\bbN$. Thus, our sum becomes
\begin{align*}
\sum_{km=\ell n}(k\ell)^{-\tau/2}
&=
\sum_{j=1}^\infty (j^2 ab)^{-\tau/2}
=
\zeta(\tau)
(ab)^{-\tau/2}
=
\zeta(\tau)
\left(\frac{nm}{d^2}\right)^{-\tau/2}
\\
&=
\zeta(\tau)
\left(\frac{[n,m]^2}{nm}\right)^{-\tau/2}
=
\zeta(\tau)
\frac{(nm)^{\tau/2}}{[n,m]^\tau},
\end{align*}
as required. 
\end{proof}

Through this calculation, for $\rho\geq0$ and $\tau>2$ we can identify $C(\tau,\rho)$ with the matrix (with respect to the standard basis) of the linear operator $(1/\zeta(\tau))T^*T$ on the Hardy space $\mathscr H^2$ of Dirichlet series; here $T:\mathscr H^2\to\mathscr H^2$ is the map
\begin{equation}
T:f\mapsto \zeta(\tfrac{\tau}{2}+it)f(\tfrac{\rho}{2}+it).
\label{c2}
\end{equation}
Observe that $\zeta(\tfrac{\tau}{2}+it)$ is the reproducing kernel of $\mathscr H^2$ for the point $\tfrac{\tau}{2}$. 

Let us discuss the important special case $\rho=0$. Then $T$ is simply the multiplication by $\zeta(\tfrac{\tau}{2}+s)$ in $\mathscr H^2$, or a \emph{multiplier} in $\mathscr H^2$. The study of such multipliers was initiated in \cite{HLS}. In particular, from the characterisation of bounded multipliers in  \cite{HLS} it follows that for $\rho=0$, the operator $T$ is bounded if and only if $\tau>2$. 

Continuing the discussion of the special case $\rho=0$, we observe that $T$ acts on elements of the standard basis $\{n^{-s}\}_{n=1}^\infty$ as follows:
\[
T: n^{-s}\mapsto \sum_{k=1}^\infty k^{-\tau/2}(kn)^{-s},
\]
and therefore the matrix of $T$ in the standard basis is 
\[
T_{m,n}=
\begin{cases} 
(m/n)^{-\tau/2} &\text{ if $n|m$,}
\\
0 &\text{ otherwise.}
\end{cases}
\]
If the $(n,m)$'th element of a semi-infinite matrix depends on $n/m$, such matrix is known as a \emph{multiplicative Toeplitz matrix}. 
Multiplicative Toeplitz matrices made a first appearance in Toeplitz's 1938 paper \cite{Toeplitz}, which has been until recently unknown to the wider community.  More recently, multiplicative Toeplitz matrices have attracted some attention, see e.g. \cite{H2,NP} and references therein.

\subsection{The spectral properties of $C(\tau,\rho)$}\label{sec.d2}

\begin{enumerate}[(1)]
\item
Case $\tau>0$: $C(\tau,\rho)$ is positive semi-definite. 
\begin{enumerate}[(a)]
\item
$\rho>0$, $\rho+\tau>1$: $C(\tau,\rho)$ is bounded and compact. It is trace class if and only if $\rho>1$. 
\item
$\rho=0$, $\tau>2$: $C(\tau,\rho)$ is bounded but not compact. 
\item
$\rho=0$, $\tau\leq2$: $C(\tau,\rho)$ is unbounded. 
\item
$\rho>0$, $\rho+\tau\leq1$: $C(\tau,\rho)$ is unbounded. 
\item
$\rho<0$: $C(\tau,\rho)$ is unbounded. 
\end{enumerate}

\item
Case $\tau=0$: $C(\tau,\rho)$ is formally rank one. It is bounded iff $\rho>1$. 
\item
Case $\tau<0$: the operator is bounded iff $\rho+\tau>1$. 
\end{enumerate}

\subsection{Justification and discussion}
\begin{enumerate}
\item[(1)]
As was the case for the matrices $A$ and $B$, 
the positive semi-definiteness of $C(\tau,\rho)$ for $\tau>0$ follows from the above proposition since the left hand side of \eqref{c1} is non-negative. 

\item[(1a)]
As established in  \cite{HP}, the eigenvalues of $C(\tau,\rho)$ satisfy the asymptotic relation $\lambda_n=\varkappa n^{-\rho}+o(n^{-\rho})$ as $n\to\infty$. In particular, it follows that $C(\tau,\rho)$ is trace class if and only if $\rho>1$. 
\item[(1b,c)]
As discussed above, for $\rho=0$ the operator $T$ of \eqref{c2} is the multiplier on $\mathscr H^2$ with the symbol $\zeta(\tfrac{\tau}2+s)$, and this multiplier is bounded if and only if $\tau>2$. 
\item[(1d)]
We have $[C(\tau,\rho)]_{n,1}=n^{-(\tau+\rho)/2}$, and for $\tau+\rho\leq1$ this sequence is not an element of $\ell^2$, thus $C(\tau,\rho)$ is unbounded. 
\item[(1e)]
The diagonal elements $[C(\tau,\rho)]_{n,n}=n^{-\rho}\to\infty$ as $n\to\infty$, and so $C(\tau,\rho)$ is unbounded. 
\item[(3)]
The case of $C(\tau,\rho)$ for $\tau<0$ is considered in detail in \cite{HP2}.
In particular, it is also proved in \cite{HP2} that for $\rho+\tau>1$ the matrix $C(\tau,\rho)$ is compact, trace class and has infinitely many positive and negative eigenvalues which obey a certain power law asymptotics. 
\end{enumerate}

\subsection{$C$ as a tensor product of $A$'s}
\label{sec.d4}
For a natural number $n$, let us write its factorisation as a product of powers of primes
\[
n=\prod_{p\text{ prime}}p^{k_p},
\]
where $k_2,k_3,k_5,k_7,\dots$ are non-negative integers labeled by prime numbers, and $k_p=0$ except for finitely many primes $p$. Writing similarly $m=\prod_p p^{j_p}$, we recall that 
\[
[n,m]=\prod_{p\text{ prime}}p^{k_p\vee j_p}.
\]
It follows that the entries of $C(\tau,\rho)$ can be written as 
\[
\frac{(nm)^{\tau/2}}{[n,m]^\tau}\frac1{(nm)^{\rho/2}}
=
\prod_{p\text{ prime}} \frac{p^{\tau(k_p+j_p)/2}}{p^{\tau(k_p\vee j_p)}} \frac1{p^{\rho(k_p+j_p)/2}}.
\]
Inspecting the right hand side, we recognise the matrix entries of $A(\tau,\rho)$ with $q=1/p$. Let us depart slightly from our earlier notation and indicate the dependance of $A$ of the parameter $q$ by writing $A(\tau,\rho;q)$. Then the above factorisation shows that $C$ can be viewed, at least formally, as the infinite tensor product 
\begin{equation}
C(\tau,\rho)=\bigotimes_{p\text{ prime}} A(\tau,\rho;1/p)
\label{fin}
\end{equation}
This point of view is central to \cite{HP,HP2}.

\begin{remark*}
It would be remiss not to mention \emph{Bohr's correspondence} at this point. Bohr's correspondence \cite[Section~2.2]{HLS} is an isomorphism between $\mathscr H^2$ and the Hardy space $H^2(\bbD^\infty)$ of the infinite-dimensional polydisk $\bbD^\infty$. At the heart of this correspondence is the idea to use prime numbers as ``labels'' for the coordinates in $\bbD^\infty$. From this point of view, \eqref{fin} suggests that it is natural to relate $C(\tau,\rho)$ to an operator acting on $H^2(\bbD^\infty)$. We will not elaborate on this as we do not wish this note to become too technical.
\end{remark*}

\section*{Acknowledgements}
The author is grateful to Ole Brevig for useful discussions and encouragement and to Kristian Seip for hospitality during the October 2022 visit to NTNU. The author is also grateful to Titus Hilberdink and Franti\v{s}ek \v{S}tampach for related useful discussions, and to the referee for advice on improving the paper.


\begin{thebibliography}{BPP}

\bibitem{AJPR}
A.~B.~Aleksandrov, S.~Janson, V.~V.~Peller, R.~Rochberg, 
\emph{An interesting class of operators with unusual Schatten-von Neumann behavior,}
in: Function spaces, interpolation theory and related topics (Lund, 2000), 61-149, de Gruyter, Berlin, 2002.



\bibitem{BPP}
O.~F.~Brevig, K.~Perfekt, A.~Pushnitski,
\emph{The spectrum of some Hardy kernel matrices,}
to appear in Ann. Inst. Fourier, arXiv:2003.11346. 

\bibitem{BL}
K. Bourque and S. Ligh, 
\emph{Matrices associated with  arithmetical functions,} 
Linear and Multilinear Algebra, \textbf{34} (1993), 261-267.

\bibitem{BM}
L.~Bouthat, J.~Mashreghi, 
\emph{The norm of an infinite L-matrix,} 
Oper. Matrices \textbf{15} (1) (2021), 47-58.

\bibitem{Brevig}
O.~F.~Brevig,
\emph{The best constant in a Hilbert-type inequality,}
Expo. Math. \textbf{42} (2024), 125530.

\bibitem{Choi}
M.~D.~Choi, 
\emph{Tricks or treats with the Hilbert matrix,} 
Am. Math. Mon. \textbf{90} (5) (1983) 301-312.

\bibitem{HLS}
H.~Hedenmalm,  P.~Lindqvist, K.~Seip,
\emph{A Hilbert space of Dirichlet series and systems of dilated functions in $L^2(0,1)$,}
Duke Math. J. \textbf{86} (1997), no.~1, 1-37.


\bibitem{H2}
T.~Hilberdink, 
\emph{Singular Values of Multiplicative Toeplitz matrices,} 
Linear and Multilinear Algebra {\bf 65} (2017), 813-829.

\bibitem{H}
T.~Hilberdink,
\emph{Matrices with multiplicative entries are tensor products,} 
Linear Algebra and its Applications, \textbf{532} (2017), 179-197.


\bibitem{HP}
T.~Hilberdink, A.~Pushnitski,
\emph{Spectral asymptotics for a family of LCM matrices,}
St.Petersburg Math. J. \textbf{34} (2023), 463-481.

\bibitem{HP2}
T.~Hilberdink, A.~Pushnitski,
\emph{Spectral asymptotics for a family of arithmetical matrices and connection to Beurling primes,}
to appear in Pure and Applied Functional Analysis, arXiv:2401.06892.

\bibitem{HL}
S. Hong and R. Loewy, 
\emph{Asymptotic behavior of eigenvalues of greatest common divisor matrices,}
Glasg. Math. J. {\bf 46} (2004), 551-569.

\bibitem{HEL}
S. Hong and K. S. Enoch Lee, 
\emph{Asymptotic behavior of eigenvalues of reciprocal power LCM matrices,}
Glasg. Math. J. {\bf 50} (2008), 163-174.

\bibitem{LS}
P.~Lindqvist, K.~Seip,
\emph{Note on some greatest common divisor matrices,}
Acta Arith. \textbf{84}, no.2  (1998), 149-154.

\bibitem{NP}
N.~Nikolski, A.~Pushnitski,
\emph{Szeg\H{o}-type limit theorems for ``multiplicative Toeplitz'' operators and non-F{\o}lner approximations,}
St.Petersburg Math. J. \textbf{32}, no. 6 (2021), 1033-1050.

\bibitem{P}
A.~Pushnitski,
\emph{The spectral density of Hardy kernel matrices,}
J. Operator Theory, \textbf{89}, no. 1 (2023), 3-21.

\bibitem{Sarason}
D.~Sarason,
\emph{Composition operators as integral operators.} In: Analysis and partial differential equations, 545-565,
Lecture Notes in Pure and Appl. Math., \textbf{122}, Dekker, New York, 1990.

\bibitem{Smith}
H.~J.~S.~Smith, 
\emph{On the value of a certain arithmetical determinant,} 
Proc. London Math. Soc. \textbf{7} (1875-1876), 208-212.


\bibitem{FS}
F.~\v{S}tampach,
\emph{The Hilbert $L$-matrix,}
Journal of Functional Analysis \textbf{282}, no.~8 (2022), paper no. 109401.

\bibitem{FS2}
F.~\v{S}tampach,
\emph{Asymptotic spectral properties of the Hilbert L-matrix,} 
SIAM J. Matrix Anal. Appl. \textbf{43} (2022), no. 4, 1658-1679.

\bibitem{Toeplitz}
O.~Toeplitz, 
\emph{Zur Theorie der Dirichletschen Reihen,}  
Amer. J. Math. \textbf{60} (1938), no. 4, 880-888. 


\end{thebibliography}
\end{document}